\documentclass[11pt, a4paper, leqno]{amsart}

\usepackage[utf8]{inputenc}
\usepackage[T1]{fontenc}
\usepackage[english]{babel}
\usepackage{csquotes}

\usepackage[margin=1in]{geometry}
\usepackage{microtype}         
\usepackage{xcolor}            

\definecolor{color1}{HTML}{4169e1}
\definecolor{color2}{HTML}{d2d2d2} 

\usepackage{enumitem}          
\setlist[enumerate,1]{label=(\arabic*)} 

\usepackage{graphicx}          
\usepackage{booktabs}          

\usepackage{etoolbox}          
\usepackage{chngcntr}          

\usepackage{mathtools}         
\usepackage{amssymb}           
\usepackage{bm}                
\usepackage{mleftright}        
\mleftright

\usepackage{mathrsfs}          
\usepackage{upgreek}           

\usepackage{tikz}              
\usepackage{quiver}            

\newtheorem{theorem}{Theorem}
\newtheorem{lemma}[theorem]{Lemma}

\theoremstyle{definition}
\newtheorem{definition}[theorem]{Definition}

\newtheorem{remark}[theorem]{Remark} 

\theoremstyle{remark}

\counterwithout{equation}{section}

\mathtoolsset{showonlyrefs,showmanualtags}

\usepackage[colorlinks=true,
            linkcolor=color1,
            citecolor=color1,
            urlcolor=color2,
            pagebackref=true,
            unicode=true]{hyperref}

\usepackage[capitalise, nameinlink]{cleveref}

\crefname{theorem}{Theorem}{Theorems}
\crefname{lemma}{Lemma}{Lemmas}
\crefname{proposition}{Proposition}{Propositions}
\crefname{corollary}{Corollary}{Corollaries}
\crefname{conjecture}{Conjecture}{Conjectures}
\crefname{definition}{Definition}{Definitions}
\crefname{example}{Example}{Examples}
\crefname{problem}{Problem}{Problems}
\crefname{exercise}{Exercise}{Exercises}
\crefname{question}{Question}{Questions}
\crefname{remark}{Remark}{Remarks}
\crefname{notation}{Notation}{Notations}
\crefname{convention}{Convention}{Conventions}

\crefformat{equation}{(#2#1#3)}
\crefmultiformat{equation}{(#2#1#3)}{ and (#2#1#3)}{, (#2#1#3)}{, and (#2#1#3)}
\crefrangeformat{equation}{(#3#1#4)--(#5#2#6)}

\makeatletter
\newcommand\mkcal@one[1]{\expandafter\gdef\csname c#1\endcsname{\ensuremath{\mathcal{#1}}}}
\forcsvlist{\mkcal@one}{A,B,C,D,E,F,G,H,I,J,K,L,M,N,O,P,Q,R,S,T,U,V,W,X,Y,Z}

\newcommand\mksf@one[1]{\expandafter\gdef\csname s#1\endcsname{\ensuremath{\mathsf{#1}}}}
\forcsvlist{\mksf@one}{A,B,C,D,E,F,G,H,I,J,K,L,M,N,O,P,Q,R,S,T,U,V,W,X,Y,Z}

\newcommand\mkbb@one[1]{\expandafter\gdef\csname #1#1\endcsname{\ensuremath{\mathbb{#1}}}}
\forcsvlist{\mkbb@one}{A,B,C,D,E,F,G,H,I,J,K,L,M,N,O,P,Q,R,S,T,U,V,W,X,Y,Z}

\newcommand\mkbf@one[1]{\expandafter\gdef\csname b#1\endcsname{\ensuremath{\mathbf{#1}}}}
\forcsvlist{\mkbf@one}{A,B,C,D,E,F,G,H,I,J,K,L,M,N,O,P,Q,R,S,T,U,V,W,X,Y,Z}

\newcommand\mkrm@one[1]{\expandafter\gdef\csname r#1\endcsname{\ensuremath{\mathrm{#1}}}}
\forcsvlist{\mkrm@one}{A,B,C,D,E,F,G,H,I,J,K,L,M,N,O,P,Q,R,S,T,U,V,W,X,Y,Z}

\newcommand\mkfrak@one[1]{\expandafter\gdef\csname f#1\endcsname{\ensuremath{\mathfrak{#1}}}}
\forcsvlist{\mkfrak@one}{A,B,C,D,E,F,G,H,I,J,K,L,M,N,O,P,Q,R,S,T,U,V,W,X,Y,Z}
\makeatother

\DeclareMathOperator{\Db}{D^b}

\DeclareMathOperator{\Hom}{Hom}

\DeclareMathOperator{\id}{id}

\DeclareMathOperator{\Spec}{Spec}

\DeclareMathOperator{\Sym}{Sym}

\DeclareMathOperator{\Res}{Res}

\DeclareMathOperator{\Sgn}{sgn}

\title[A note on stability conditions on projective spaces]{A note on stability conditions on projective spaces}
\author{Yiran Cheng}
\address{Department of Mathematics, Imperial College London, London SW7 2AZ, United Kingdom}
\email{y.cheng@imperial.ac.uk}

\begin{document}

\begin{abstract}
    We give a new proof of Li's theorem on the existence of geometric Bridgeland stability conditions on the bounded derived category of coherent sheaves on projective spaces. These stability conditions can then be restricted to induce Bridgeland stability conditions on arbitrary smooth projective varieties.
\end{abstract}

\maketitle
\raggedbottom


Recently, Li \cite{Li26} established the existence of stability conditions on any smooth projective variety.
The key ingredient in Li's construction is the family of stability conditions on $\Db(\PP^n)$ satisfying Li's condition, so that they can be restricted to arbitrary smooth projective subvarieties. 
More precisely, he starts with a continuous family of $(\ZZ/2\ZZ)^n \rtimes \fS_n$-invariant stability conditions on $\Db(C^n)$ parametrized by $(a,b)\in \QQ_{>0} \times \QQ$, where $C$ is an elliptic curve and $(\ZZ/2\ZZ)^n \rtimes \fS_n$ acts on $C^n$ via factorwise involutions and permutations; see \cite[Theorem~5.2]{Li26}.
Then, a highly technical step in his proof is showing that such stability conditions descend to $\Db(\PP^n)$; see \cite[Theorem~6.3]{Li26}.
Moreover, the family satisfies Li's condition 
\begin{equation}\label{eq:Li}
    \phi^-_{a,b} (\cF \otimes \cO(mH)) < \phi^-_{a,b} (\cF[1])
\end{equation}
on $C^n$ \cite[Proposition~5.4]{Li26} and this property naturally descends to $\PP^n$ \cite[Corollary~6.4]{Li26}.
Finally, for any smooth subvariety, we can find a certain element in this family of stability conditions that restricts to this subvariety; see \cite[Theorem~6.5]{Li26}.

In this paper, we provide a slightly different approach. 
We view $\PP^{n-1}$ as a fiber of the Albanese map of $\Sym^n C$:
    \begin{equation}
        \begin{tikzcd}
            {\PP^{n-1}} & {\Sym^n C} \\
            {\Spec \CC} & C \, .
            \arrow[from=1-1, to=1-2]
            \arrow[from=1-1, to=2-1]
            \arrow["alb", from=1-2, to=2-2]
            \arrow[from=2-1, to=2-2]
        \end{tikzcd}
    \end{equation}
    According to \cite[Theorem~1.7]{cheng2025bridgeland}, any stability condition on $\Db(\Sym^n C)$ \emph{restricts} to its Albanese fibers $\Db(\PP^{n-1})$, since the target $C$ is abelian and the fibers are isotrivial. 
    Consider the natural morphism $\pi \colon [C^n/{\fS_n}] \to C^n / {\fS_n} \simeq \Sym^n C$ from the quotient stack to the quotient space.
Since this is a good quotient,
the pullback functor
\begin{equation}
    \pi^* \colon \Db(C^n/\fS_n) \to \Db([C^n/\fS_n])
\end{equation}
is fully faithful.
Viewing $\pi^* \Db(\Sym^n C \simeq C^n/\fS_n )$ as an admissible subcategory of the equivariant category $\Db([C^n /\fS_n])$ motivates the following main theorem of this paper.

\begin{theorem}\label{to quotient}
    Let $\sigma$ be any $\fS_n$-invariant stability condition on $\Db(C^n)$, which we also regard as a stability condition on the equivariant category $\Db([C^n /\fS_n])$.
    Then, the stability condition $\sigma$ \emph{restricts} to the admissible subcategory $\pi^*\Db(C^n/\fS_n)$.
\end{theorem}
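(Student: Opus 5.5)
The plan is to reduce ``restricts'' to a closure property of HN filtrations, and then check that closure property locally on $C^n$ using the stabilizers of the $\fS_n$-action. By ``$\sigma=(Z,\cA)$ restricts to $\cD':=\pi^*\Db(C^n/\fS_n)$'' I mean that $(Z|_{\cD'},\cA\cap\cD')$ is a stability condition on $\cD'$. The standard criterion is that this holds as soon as, for every $E\in\cD'$, all HN factors of $E$ with respect to $\sigma$ again lie in $\cD'$. In that case:
\begin{enumerate}
\item $\cA\cap\cD'$ is the heart of a bounded t-structure on $\cD'$;
\item HN filtrations in $\cD'$ are those of $\sigma$;
\item the support property is inherited because $K(\cD')\to K([C^n/\fS_n])$ and the lattice map are compatible.
\end{enumerate}
So the whole proof reduces to proving this closure property.

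First, I will record how $\sigma$ lives on the equivariant category. Since $\sigma$ is $\fS_n$-invariant and HN filtrations are unique, the HN filtration on $\Db(C^n)$ of the underlying object of an equivariant object $E$ carries a unique compatible $\fS_n$-linearization. This gives the HN filtration of $E$ in $\Db([C^n/\fS_n])$. In particular, the forgetful functor to $\Db(C^n)$ sends HN factors to HN factors, and so does restriction of equivariance to any subgroup $H\subset\fS_n$.

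Second, I will describe $\cD'$ intrinsically. Because $\Sym^nC$ is smooth and $\fS_n$ acts on $C^n$ through reflections (Chevalley--Shephard--Todd), $\pi_*\pi^*\simeq\id$ and $\cD'$ is admissible. I will use an orthogonal description of its complement, coming from the Polishchuk--Van den Bergh type decomposition of $\Db([C^n/\fS_n])$ indexed by partial diagonals $\Delta_H=(C^n)^H$ and nontrivial characters of the stabilizers $H$. Concretely, I expect that $E\in\cD'$ if and only if, for every point $x$ with stabilizer $H=\fS_x$, the $H$-isotypic components of $Li_x^*E$ (or of the derived restriction to $\Delta_H$, as an $N_H$-equivariant object) are of the ``invariant type'' prescribed by the normal representation. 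Equivalently, I expect $E\in\cD'$ if and only if $\Hom(E,\,\cdot\,)$ and $\Hom(\,\cdot\,,E)$ vanish against the generating objects of the complementary pieces, which are induced from sheaves on $\Delta_H$ twisted by characters.

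The third step is the main obstacle: showing this condition passes to HN factors. Let $E=\pi^*F$ with HN filtration $0=E_0\to\dots\to E_m=E$ and factors $A_i$. I would argue by induction on $m$, looking at the last map $E\to A_m$ and the first map $E_1\to E$. I project $A_m$ to $\cD'$ and to its complement $\cD'^{\perp}$. The component of $A_m$ in $\cD'^{\perp}$ receives a map from $E$ that must vanish, since $E\in\cD'$; I then want to use the phase inequalities to conclude that this component is zero. The point where $\fS_n$-invariance of $\sigma$ should enter is that the complementary pieces are ``character twists'' of one another. Tensoring by a character $\chi$ of a stabilizer $H$, transported along the induction functor, is an autoequivalence of the $H$-equivariant category that preserves $\sigma$, because it does not change the underlying object in $\Db(C^n)$. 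Hence it preserves phases and HN filtrations. This symmetry should let me compare the $\cD'$-part and the complementary parts of each $A_i$ via $\Hom$'s with equal phases, and force the complementary parts to vanish.

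If this direct argument stalls, my fallback is a local one. I would restrict to a $\fS_x$-invariant neighbourhood of each point $x$ and use that the isotypic decomposition of an $\fS_x$-equivariant object is functorial and compatible with the (forgetful-compatible) HN filtration. I expect this last compatibility to be the technically delicate point.
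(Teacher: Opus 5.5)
\textbf{There is a genuine gap.} Your reduction to closure of HN factors is correct, and so is your remark that restricting the equivariance to a subgroup or twisting by a character is compatible with HN filtrations; the paper uses both. The gap is your third step, which carries all the content but contains no mechanism. The component of $A_m$ in $(\cD')^{\perp}$ receives only the zero map from $E\in\cD'$ by the very definition of $(\cD')^{\perp}$, so this yields no information. Nothing you list can force that component to vanish.

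Indeed, every ingredient you use is available for an arbitrary curve: smoothness of $\Sym^n C$, the reflection action, the orthogonal decomposition, uniqueness of HN filtrations, and invariance under character twists. Yet the statement is false unless one uses that $C$ is elliptic. For example, take $C=\PP^1$, $n=2$, and the heart of the tilting object $\cO\oplus\cO(1,0)\oplus\cO(0,1)\oplus\cO(1,1)$ on $\PP^1\times\PP^1$. Its simple objects are $S_1=\cO,S_2,S_3,S_4$, and the swap exchanges $S_2$ and $S_3$. Choose $Z(S_2)=Z(S_3)$ and give $S_1$ a phase strictly larger than the others. This is an $\fS_2$-invariant stability condition. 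But for $p$ on the diagonal and $W=\pi^{-1}(\pi(p))$, the first HN factor of $\pi^*\cO_{\pi(p)}=\cO_W$ is $\cO\otimes H^0(\cO_W)\simeq\cO\oplus(\cO\otimes\Sgn)$, which is not in $\pi^*\Db(\PP^2)$.

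Your local fallback does not rescue this. HN filtrations are global, there is no way to restrict $\sigma$ to an invariant open neighbourhood of $x$, and off the fixed locus equivariant objects do not split into isotypic pieces.

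What is missing, following the paper, comes in three steps.
\begin{enumerate}
\item \textbf{Reduction to transpositions.} By the derived Kempf criterion, $E\in\cD'$ if and only if all derived fibres carry the \emph{trivial} stabiliser action, not a type ``prescribed by the normal representation''. The stabilisers are Young subgroups generated by transpositions, so $\cD'=\bigcap_{i<j}(\varpi_{\{i,j\}}^*)^{-1}\pi_{\{i,j\}}^*\Db(C^n/\fS_{\{i,j\}})$. Together with your compatibility of HN filtrations with restriction to subgroups, this reduces everything to the two-term decomposition $\langle\pi_{\{i,j\}}^*\Db(C^n/\fS_{\{i,j\}}),\delta_{\{i,j\}*}\Db(\Delta_{\{i,j\}})\rangle$. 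Higher stabilisers never enter.
\item \textbf{The essential geometric input.} $\Delta_{\{i,j\}}$ is an isotrivial fibre of $x\mapsto x_i-x_j$ to the elliptic curve $C$. Hence by \cite[Theorem~1.7]{cheng2025bridgeland}, $\sigma$ restricts to $\delta_{\{i,j\}*}\Db(\Delta_{\{i,j\}})$.
\item \textbf{The phase inequality.} Write $\pi,\delta$ for $\pi_{\{i,j\}},\delta_{\{i,j\}}$. By \cite{Pol07}, restriction to $\pi^*\Db$ reduces to $\phi^-(\pi^*\pi_!\cF)\ge\phi^-(\cF)$. Via the triangle $\delta_*\delta^!\cF\to\cF\to\pi^*\pi_!\cF$, this reduces further to $\phi^-(\cF)\le\phi^-(\delta_*\delta^!\cF[1])$. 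Let $\cG$ be the minimal HN factor of $\delta_*\delta^!\cF$; it lies in $\delta_*\Db(\Delta_{\{i,j\}})$ by step (2). The duality $\delta^!\simeq\delta^*(-\otimes\Sgn)[-1]$ holds because the normal bundle is $T_C\otimes\Sgn$ with $T_C$ trivial. Adjunction and this duality give $\Hom(\cF\otimes\Sgn,\cG[1])\neq0$, and $\otimes\Sgn$-invariance concludes.
\end{enumerate}

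Step (3) is the precise form your character-twist idea has to take. Both elliptic inputs are unavailable for $C=\PP^1$: the restriction to the diagonal in step (2), and the triviality of $T_C$ in step (3). This is consistent with the example above.
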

For the precise definition of \emph{restriction}, see Definition~\ref{def:restriction}.
Here, $\fS_n$-invariant stability conditions on $\Db(C^n)$
induce stability conditions on the equivariant category $\Db([C^n/\fS_n])$ \cite{Pol07,MMS09}, and can be identified as stability conditions with a certain invariance on this equivariant category.

Therefore, if we start with the same family of stability conditions $\{\sigma_{a,b}\}$ on $\Db(C^n)$ and view it as a family of stability conditions on $\Db([C^n/\fS_n])$, by Theorem~\ref{to quotient} and \cite[Theorem~1.7]{cheng2025bridgeland}, we obtain the same family of geometric stability conditions on $\Db(\PP^{n-1})$ as in \cite{Li26}.
The same argument then shows certain stability conditions in this family can be restricted to subvarieties.

\begin{remark}
    The two constructions will produce the same family of stability conditions on $\Db(\PP^n)$.
    We also remark that in a separate paper \cite{CF26}, we establish a uniform bound on $(a,b)$ for which Li's condition \eqref{eq:Li} holds. 
    The same analysis applies here and yields the exact same bound for $a$.
\end{remark}

Throughout this paper, all varieties are defined over the field of complex numbers $\CC$, and all functors between derived categories are implicitly assumed to be derived. 
For a smooth projective variety $X$, we denote by $\Db(X)$ the bounded derived category of coherent sheaves on $X$. 
If a finite group $G$ acts on $X$, we use $\Db([X/G])$ to denote the bounded derived category of equivariant coherent sheaves. 
For the precise definition of stability conditions, we refer to \cite[Appendix~A]{BMS16}.



\section*{Proof of Theorem~\ref{to quotient}}
Before proceeding with the proof of the main theorem, we first make precise the notion of restricting a stability condition along a functor, a term heavily referenced in the introduction.
\begin{definition}\label{def:restriction}
    For a conservative triangulated functor $\Phi \colon \cD' \to \cD$ and a (pre-)stability condition $\sigma=(\cP,Z)$ on $\cD$, 
    we say the (pre-)stability condition \emph{restricts} to $\cD'$ if the naive pullback
    \begin{equation}
        \Phi^{-1}\sigma \coloneqq (\Phi^{-1} \cP , Z\circ \Phi) , \quad \text{where} \quad \Phi^{-1} \cP(\phi) \coloneqq \{ \cE \in \cD' \mid \Phi(\cE) \in \cP(\phi) \} ,
    \end{equation}
    defines a (pre-)stability condition on $\cD'$.
\end{definition}

Note that if a stability condition on $\cD$ restricts to a pre-stability condition on $\cD'$, it automatically defines a stability condition since it satisfies the support property with respect to the image lattice.

A special case is when $\cD' \subset \cD$ is a full subcategory. It is straightforward to observe the following.

\begin{lemma}
    To show that a stability condition restricts to a full subcategory, it suffices to show that the HN factors of any object in this subcategory remain in the subcategory.  
\end{lemma}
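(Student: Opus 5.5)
The plan is to check the axioms of a (pre-)stability condition directly for the pair $(\cP', Z')$ on the full subcategory $\cD' \subset \cD$, defined by $\cP'(\phi) \coloneqq \cP(\phi) \cap \cD'$ and $Z' \coloneqq Z|_{K(\cD')}$. This pair is exactly the naive pullback of Definition~\ref{def:restriction} along the inclusion, which is conservative. We assume throughout that $\cD'$ is a full triangulated subcategory closed under isomorphisms; an admissible subcategory such as $\pi^*\Db(C^n/\fS_n)$ is of this kind.

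First we dispose of the axioms that hold automatically. Compatibility says that for $0 \neq E \in \cP'(\phi)$ we have $Z'(E) \in \RR_{>0} e^{i\pi\phi}$. This is inherited from $\sigma$, because $K(\cD') \to K(\cD)$ is compatible with classes. The shift axiom $\cP'(\phi+1) = \cP'(\phi)[1]$ holds because $\cD'$ is closed under shifts. Hom-vanishing holds as well: for $\phi_1 > \phi_2$ we have $\Hom(\cP'(\phi_1), \cP'(\phi_2)) = 0$, since $\cD'$ is full and the vanishing already holds in $\cD$. Each $\cP'(\phi)$ is additive and closed under isomorphism, being an intersection of two such subcategories.

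The main content is the existence of HN filtrations in $\cD'$. Take $E \in \cD'$ and its HN filtration in $\cD$,
\[
0 = E_0 \to E_1 \to \cdots \to E_m = E, \qquad \operatorname{cone}(E_{i-1} \to E_i) = A_i \in \cP(\phi_i),
\]
with $\phi_1 > \cdots > \phi_m$. By hypothesis every $A_i$ lies in $\cD'$, and we show by descending induction that every $E_i$ lies in $\cD'$. The case $E_m = E$ is given. If $E_i \in \cD'$, rotate the triangle to get $A_i[-1] \to E_{i-1} \to E_i \to A_i$. Then $E_{i-1}$ is an extension of two objects of $\cD'$. Since $\cD'$ is a triangulated subcategory, closed under extensions and isomorphisms, it follows that $E_{i-1} \in \cD'$. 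Hence the whole filtration, with its triangles, lives in $\cD'$, and its factors lie in $\cP'(\phi_i)$. This gives the required HN filtration, and the pair is a pre-stability condition.

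Finally, we upgrade to a stability condition using the observation recorded just before the lemma. The support property for $\sigma$ is a bound $\|v(E)\| \le C|Z(E)|$ for semistable $E$. Semistable objects of $\sigma'$ are semistable for $\sigma$, so the same bound holds with respect to the image lattice of $K(\cD')$. Local finiteness, if required, restricts in the same way. We expect no real obstacle anywhere in this argument. The only point needing care is the extension-closure induction, which relies on $\cD'$ being a strictly full triangulated subcategory.
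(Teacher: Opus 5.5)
Your proposal is correct and is the routine verification the paper has in mind when it calls this lemma straightforward (the paper gives no written proof). The non-HN slicing axioms are inherited because $\cD'$ is full and shift-closed. Extension-closure puts the whole $\cD$-HN filtration in $\cD'$ once its factors are there. The support property passes to the image lattice, exactly as the paper remarks just before the lemma.
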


We now turn to the main theorem, beginning with the case $n=2$.

\begin{lemma}\label{lem:n=2}
    Theorem~\ref{to quotient} holds for $n=2$.
\end{lemma}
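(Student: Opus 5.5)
The plan is to identify $\pi^*\Db(\Sym^2 C)$ inside $\Db([C^2/\fS_2])$ as the orthogonal of an explicit subcategory. Then, using the previous lemma, I would check that this orthogonality passes to HN factors. The argument has four steps, and the last is where I expect the difficulty.

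\emph{Step 1 (orthogonal description).} Here $\fS_2=\ZZ/2\ZZ$ acts on $C^2$ by the swap. Its fixed locus is the diagonal $\Delta\simeq C$, a smooth divisor, so $\Sym^2 C$ is smooth and the stabilizer acts on the normal bundle $N_{\Delta}$ by the sign character $\Sgn$. Let $i\colon \Delta\to C^2$ be the inclusion and set $\cB \coloneqq \{ i_*\cF\otimes\Sgn \mid \cF\in\Db(C)\}$, viewed as equivariant objects.
\begin{itemize}
\item The vanishing $\Hom(\pi^*\cG, i_*\cF\otimes \Sgn[k])=0$ follows from the adjunction $\pi^*\dashv\pi_*$. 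Indeed, $\pi_*$ is the exact functor of $\fS_2$-invariants, and $\fS_2$ acts on $i_*\cF\otimes\Sgn$ through $\Sgn$, so its invariants vanish.
\item The known semiorthogonal decomposition for a reflection quotient (Kawamata; Polishchuk--Van den Bergh) is $\Db([C^2/\fS_2])=\langle \cB, \pi^*\Db(\Sym^2 C)\rangle$. Combined with the vanishing, this gives
\[
\pi^*\Db(\Sym^2 C)={}^{\perp}\cB=\{\cE \mid \Hom(\cE,\cB[k])=0 \ \forall k\}.
\]
\end{itemize}

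\emph{Step 2 (HN filtrations are equivariant).} Since $\sigma$ is $\fS_2$-invariant, uniqueness of HN filtrations shows that the HN filtration of the underlying object of an equivariant $\cE$ is $\fS_2$-stable. It therefore lifts to the HN filtration of $\cE$ in $\Db([C^2/\fS_2])$. Moreover, twisting by $\Sgn$ does not change the underlying object, so $-\otimes\Sgn$ is an autoequivalence preserving $\sigma$. Consequently $\cE\otimes\Sgn$ has HN factors $\cE_j\otimes\Sgn$.

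\emph{Step 3 (reduce to the extreme factors).} Take $\cE\in\pi^*\Db(\Sym^2C)$ with HN factors $\cE_1,\dots,\cE_m$ of decreasing phases. By the lemma above, it suffices to show that each $\cE_j$ lies in ${}^\perp\cB$. I would argue by induction on $m$, treating the last factor $\cE_m$ (minimal phase, a quotient of $\cE$) first. The natural route is to split it by the decomposition of Step 1 as
\[
\cE_m' \to \cE_m \to \cE_m''\to, \qquad \cE_m''\in\cB,\ \cE_m'\in{}^\perp\cB,
\]
and to show $\cE_m''=0$. Composing $\cE\to\cE_m\to\cE_m''$ gives zero because $\cE\in{}^\perp\cB$. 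One then wants to play this against the phase bound $\phi^-(\cE_m'')\ge\phi(\cE_m)$, which one expects if $\cB$ is closed under taking HN factors.

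\emph{Step 4 (main obstacle).} The hard step is exactly that Hom from a semistable factor into $\cB$ is controlled, i.e.\ that the $\cB$-component of a semistable equivariant object vanishes once it vanishes for the whole $\cE$. What I would try first is a local computation along $\Delta$. For equivariant $\cE_j$, $\Hom(\cE_j,i_*\cF\otimes\Sgn)$ is the $\Sgn$-isotypic part of $\Hom(\cE_j|_\Delta,\cF)$. Membership in $\pi^*\Db(\Sym^2C)$ is equivalent to the stabilizer acting trivially on the derived restriction $Li^*\cE$, up to the odd normal direction. I would then compare $\cE$ with $\cE\otimes\Sgn$, which have the same underlying HN filtration by Step 2. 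An object lies in $\pi^*\Db$ iff its $\cB$-projection vanishes, and the projection triangle $\cE\otimes\Sgn\to\cE\to(\text{object supported on }\Delta)$ should be built from the canonical maps. If these canonical maps are compatible with HN filtrations, the vanishing of the $\cB$-part of $\cE$ forces it factor by factor. Verifying this compatibility, using that $\Delta$ has codimension one so $\cB$-components are detected by a single $\Sgn$-twisted restriction, is where I expect the real work to lie.
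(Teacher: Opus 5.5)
There is a genuine gap. Steps~1 and~2 are fine: your decomposition $\langle \cB,\pi^*\Db(\Sym^2C)\rangle$ is the paper's $\langle \pi^*\Db(C^2/\fS_2),\delta_*\Db(\Delta)\rangle$ transported by the Serre functor $-\otimes\Sgn[2]$. But the statement you isolate in Step~4 is just the lemma itself, namely that the $\cB$-component of each HN factor vanishes when it vanishes for $\cE$. What you offer for it does not work, and two ingredients are missing.

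\emph{First}, you only say one ``expects'' $\cB$ to be closed under HN factors. This is a global property of $\sigma$, and it is where the hypothesis that $C$ is elliptic enters; your sketch never uses that hypothesis. Here $\Delta$ is an isotrivial fiber of $C^2\to C$, $(x_1,x_2)\mapsto x_1-x_2$. So by \cite[Theorem~1.7]{cheng2025bridgeland} every stability condition on $\Db(C^2)$ restricts to $\Db(\Delta)$, and hence both isotypic pieces $\delta_*\Db(\Delta)$ and $\cB=\delta_*\Db(\Delta)\otimes\Sgn$ are closed under HN factors.

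\emph{Second}, even granting this, Step~3 cannot close. The vanishing of $\cE\to\cE_m\to\cE_m''$ only says that $\cE_m\to\cE_m''$ factors through $\cE_{<m}[1]$, whose phases exceed $\phi(\cE_m)+1$. Nothing bounds $\phi^+(\cE_m'')$ from above, so $\phi^-(\cE_m'')\ge\phi(\cE_m)$ gives no contradiction. Nor does the triangle of Step~4 exist: there is no natural map $\cE\otimes\Sgn\to\cE$, since already $\Hom(\cO\otimes\Sgn,\cO)=0$ in $\Db([C^2/\fS_2])$. The natural triangle is $\cE(-\Delta)\to\cE\to\cE\otimes\cO_\Delta$. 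Here $\cO(-\Delta)\simeq\pi^*M\otimes\Sgn$ with $M$ a nontrivial line bundle on $\Sym^2C$, and twisting by $\pi^*M$ does not preserve $\sigma$.

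The mechanism you are missing is Grothendieck--Verdier duality along the fixed divisor, applied to the projection functor on all objects at once rather than factor by factor. The paper uses Polishchuk's criterion: it suffices that $\phi^-(\pi^*\pi_!\cF)\ge\phi^-(\cF)$ for every $\cF$. By the triangle $\delta_*\delta^!\cF\to\cF\to\pi^*\pi_!\cF$, this reduces to $\phi^-(\delta_*\delta^!\cF[1])\ge\phi^-(\cF)$.

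Since $N_{\Delta}\simeq T_C\otimes\Sgn\simeq\cO_\Delta\otimes\Sgn$ (ellipticity again), one has $\delta^!(-)\simeq\delta^*(-\otimes\Sgn)[-1]$. Let $\cG$ be the minimal HN factor of $\delta_*\delta^!\cF$. The first ingredient gives $\cG\simeq\delta_*\delta^!\cG$, hence $0\neq\Hom(\delta_*\delta^!\cF,\cG)\simeq\Hom(\cF\otimes\Sgn,\cG[1])$. Therefore $\phi^-(\cF\otimes\Sgn)\le\phi(\cG)+1$. The $\otimes\Sgn$-invariance of $\sigma$, which your Step~2 records but never effectively uses, then turns this into the required inequality.

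A dual argument in your own SOD also works. Let $\beta$ be the left projection onto $\cB$; show $\phi^+(\pi^*\pi_*\cF)\le\phi^+(\cF)$ using $\Hom(\cG,\beta\cF)\simeq\Hom(\cG,\cF\otimes\Sgn[1])$ for $\cG\in\cB$. But it needs exactly the same two inputs.
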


\begin{proof}
    Let $\pi \colon [C^2 / \fS_2] \to C^2 / \fS_2$ be the quotient and $\delta \colon [\Delta / \fS_2] \to [C^2 / \fS_2]$ be the embedding of the diagonal.
    We have a SOD
    \begin{equation} \label{eq:SOD}
        \Db([C^2 / \fS_2]) = \left\langle \pi^* \Db(C^2 / \fS_2), \delta_* \Db(\Delta) \right\rangle,
    \end{equation}
    see, for example, \cite[Theorem~1.6]{IU15} or \cite[Theorem~B]{PB19}.
    Here, we identify $\Db(\Delta)$ as the subcategory with trivial linearization in the splitting into two isotypic components
    \begin{equation}
        \Db([\Delta / \fS_2]) = \Db(\Delta) \oplus \bigl(\Db(\Delta) \otimes \Sgn \bigr),
    \end{equation}
    where $\Sgn$ denotes the nontrivial $\fS_2$-linearization.
    Note that any $\fS_2$-invariant stability condition on $\Db(C^2)$ can be viewed as a stability condition on $\Db([C^2 / \fS_2])$ that is $\otimes \Sgn$-invariant; see, for example, \cite[Theorem~4.8 and Lemma~4.11]{PPZ23}. 
    We prove the following claim, which immediately implies the lemma.
    
    \noindent \textbf{Claim:} Any $\otimes \Sgn$-invariant stability condition on $\Db([C^2 / \fS_2])$ restricts to both SOD components in \eqref{eq:SOD}.

    We first show that any stability condition on $\Db([C^2 / \fS_2])$ restricts to $\delta_* \Db(\Delta)$.
    Note that $\Delta$ is an isotrivial fiber of the map $(x_1, x_2) \mapsto x_1 -x_2$ to the elliptic curve $C$.
    By \cite[Theorem~1.7]{cheng2025bridgeland}, we know 
    that any stability condition on $\Db(C^2)$ restricts to $\Db(\Delta)$;
    in particular, any $\fS_2$-invariant stability condition on $\Db(C^2)$ restricts to (an $\fS_2$-invariant stability condition on) $\Db(\Delta)$.
    Consequently,
    any stability condition on $\Db([C^2 / \fS_2])$ restricts to $\delta_* \Db([\Delta / \fS_2])$, hence restricts further to the component $\delta_* \Db(\Delta)$ since there is no nonzero morphism between different isotypic components.
    
    Now we show that any $\otimes \Sgn$-invariant stability condition restricts to $\pi^* \Db(C^2 / \fS_2)$.
    We adopt the standard notation
    \begin{equation}
        \delta^* \dashv \delta_* \dashv \delta^! \qquad \text{and} \qquad \pi_! \dashv \pi^* 
    \end{equation}
    for the adjoint functors.
    By \cite[Theorem~2.1.2 and proof of Corollary~2.2.2]{Pol07}, it suffices to show that 
    \begin{equation}
        \phi^-(\pi^* \pi_! \cF) \geq \phi^-(\cF).
    \end{equation}
    By the triangle
    \begin{equation}\label{triangle}
        \delta_* \delta^! \cF \to  \cF \to \pi^* \pi_! \cF \to \delta_* \delta^! \cF[1] ,
    \end{equation}
    we know that
    \begin{equation}
        \phi^-( \pi^* \pi_! \cF ) \geq \min \left\{ \phi^-(\cF), \phi^-(\delta_* \delta^! \cF[1]) \right\},
    \end{equation}
    so it suffices to show
    \begin{equation} \label{ineq:n=2}
        \phi^-(\cF) \leq \phi^-(\delta_* \delta^! \cF [1]).
    \end{equation}
    Let $\cG$ be the minimal HN factor of $\delta_* \delta^! \cF$.
    From the first part of the proof, we know the stability condition restricts to $\delta_* \Db(\Delta)$, hence $\cG \simeq \delta_* \cG'$ for some $\cG'$. Since $\delta^!\delta_* \simeq \id$, we have
    \begin{equation}\label{eq:factor}
        \cG \simeq \delta_* \delta^!\cG.
    \end{equation}
    By Grothendieck--Verdier duality, we know that
    \begin{equation}\label{eq:duality}
        \delta^!(-) \simeq \delta^*(- \otimes \Sgn)[-1],
    \end{equation}
    where $\otimes \Sgn$ changes the linearization; see, for example, \cite[Proposition~1.20 and Theorem~3.8]{Nir09}.
    Therefore, we have
    \begin{equation}\thickmuskip=3mu \medmuskip=2mu
        \Hom(\delta_* \delta^! \cF,\cG) \simeq \Hom(\delta^!\cF ,\delta^! \cG) \overset{\eqref{eq:duality}}{\simeq} \Hom(\delta^*(\cF \otimes \Sgn [-1]) , \delta^!\cG) \overset{\eqref{eq:factor}}{\simeq} \Hom(\cF \otimes \Sgn, \cG [1]) .
    \end{equation}
    As $\Hom(\delta_* \delta^! \cF,\cG) \neq 0$, we know $\Hom(\cF \otimes \Sgn, \cG  [1]) \neq 0$, hence
    \begin{equation}
        \phi^-(\cF \otimes \Sgn) \leq \phi(\cG[1]) = \phi^-(\delta_* \delta^! \cF [1]) .
    \end{equation}
    Since the stability condition is $\otimes \Sgn$-invariant, this is equivalent to \eqref{ineq:n=2}.
\end{proof}

To approach the general case, we first fix some notation before the proof.
For any pair $i\neq j$, let $\Delta_{\{i,j\}} \coloneqq \{x_i=x_j\}$ be the big diagonal and let $\fS_{\{i,j\}}\subset \fS_n$ be the subgroup that switches $x_i$ with $x_j$, which therefore fixes $\Delta_{\{i,j\}}$.
We denote the relevant morphisms as in the following commutative diagram
\begin{equation}\label{eq:diagram}
    \begin{tikzcd}
    	{[\Delta_{\{i,j\}} / \fS_{\{i,j\}}]} & {[C^n / \fS_{\{i,j\}}]} & {[C^n / \fS_n]} \\
    	& {C^n / \fS_{\{i,j\}}} & {C^n / \fS_n} \, ,
    	\arrow["{{\delta_{\{i,j\}}}}", from=1-1, to=1-2]
    	\arrow["{{\varpi_{\{i,j\}}}}", from=1-2, to=1-3]
    	\arrow["{{\pi_{\{i,j\}}}}", from=1-2, to=2-2]
    	\arrow["\pi", from=1-3, to=2-3]
    	\arrow[from=2-2, to=2-3]
    \end{tikzcd}
\end{equation}
where the pullback along the projection $\varpi_{\{i,j\}}$ is nothing but the restriction functor
\begin{equation}
    \varpi_{\{i,j\}}^* = \Res^{\fS_n}_{\fS_{\{i,j\}}} \colon  \Db([C^n / \fS_n]) \to \Db([C^n / \fS_{\{i,j\}}]) .
\end{equation}
We have the following straightforward observation.
\begin{lemma}\label{intersection}
    Inside the equivariant category $\Db([C^n / \fS_n])$, we have the equality
    \begin{equation}
        \pi^* \Db(C^n / \fS_n) = \bigcap_{1 \leq i<j \leq n} (\varpi_{\{i,j\}}^*)^{-1} \pi_{\{i,j\}}^*  \Db(C^n / \fS_{\{i,j\}}).
    \end{equation}
\end{lemma}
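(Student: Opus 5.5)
We identify both sides of the equality with full subcategories cut out by a local condition on stabilizers, using the standard description of the image of pullback along a good moduli space map. For a finite group $G$ acting on a smooth variety $X$ with quotient $p\colon [X/G]\to X/G$, the functor $p^*$ is fully faithful. Its essential image should consist of those $G$-equivariant complexes $\cE$ such that, for every point $x\in X$, the stabilizer $G_x$ acts trivially on the derived fiber $\cE|_x$ (equivalently on all cohomology sheaves of the derived fiber). We will take this criterion as the working characterization; for $X=C^n$ it is the content behind the SOD results cited in the proof of Lemma~\ref{lem:n=2} (cf.\ \cite{IU15,PB19}). Concretely, $\cE\in p^*\Db(X/G)$ if and only if $\cE\simeq p^*p_*^G\cE$ via the counit, and this can be checked on fibers at closed points. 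Since $p_*^G$ takes invariants, the counit is an isomorphism on the fiber at $x$ exactly when $G_x$ acts trivially there.

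The inclusion ``$\subseteq$'' is then formal, from the commutative diagram \eqref{eq:diagram}. If $\cE=\pi^*\cF$, write $q\colon C^n/\fS_{\{i,j\}}\to C^n/\fS_n$ for the bottom map. Then
\begin{equation}
\varpi_{\{i,j\}}^*\pi^*\cF\simeq\pi_{\{i,j\}}^*q^*\cF,
\end{equation}
which lies in $\pi_{\{i,j\}}^*\Db(C^n/\fS_{\{i,j\}})$.

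For ``$\supseteq$'', we apply the fiberwise criterion. Let $\cE$ lie in the right-hand side and fix a point $x\in C^n$ with stabilizer $G_x\subset\fS_n$. The point $x$ determines a partition of $\{1,\dots,n\}$ according to which coordinates coincide, and $G_x$ is the corresponding Young subgroup $\prod_k \fS_{B_k}$. This group is generated by the transpositions $(i\,j)$ with $i,j$ in the same block, that is, with $x\in\Delta_{\{i,j\}}$. For each such transposition, the hypothesis $\varpi_{\{i,j\}}^*\cE\in\pi_{\{i,j\}}^*\Db$ and the criterion applied to the group $\fS_{\{i,j\}}$ (whose stabilizer at $x$ is $\fS_{\{i,j\}}$ itself) show that $(i\,j)$ acts trivially on $\cE|_x$. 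Here we use that restriction $\Res^{\fS_n}_{\fS_{\{i,j\}}}$ does not change the underlying sheaf or the action of elements of $\fS_{\{i,j\}}$. Since the transpositions generate $G_x$, all of $G_x$ acts trivially on $\cE|_x$, and hence $\cE\in\pi^*\Db(C^n/\fS_n)$.

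The main obstacle is justifying the fiberwise criterion for complexes rather than sheaves. One has to check that the counit $\pi^*\pi_*^{\fS_n}\cE\to\cE$ being an isomorphism is detected on derived fibers at closed points. This uses flatness of nothing in particular, but does use exactness of invariants in characteristic zero, so that $(\pi_*^{G}\cE)|_{\bar x}\simeq(\cE|_x)^{G_x}$ étale-locally near the image point $\bar x$. We would first try to prove this via Luna's étale slice: étale-locally, $[C^n/\fS_n]$ near $x$ looks like $[C^n/G_x]$ near $x$, which reduces the criterion to the case where $x$ is a fixed point. Then Nakayama's lemma on the cone of the counit finishes the argument. Alternatively, we can simply cite the known characterization of $\pi^*\Db$ as a Kleinian/stabilizer-triviality condition.
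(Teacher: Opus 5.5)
Your argument is the paper's argument. Both characterize each side by the derived Kempf criterion, i.e.\ the stabilizer acts trivially on the cohomology of every derived fiber; the paper cites this as \cite[Theorem~1.3]{Nev08}. Both then get ``$\supseteq$'' from the fact that each $(\fS_n)_x$ is a Young subgroup generated by the transpositions $(i\,j)$ with $x\in\Delta_{\{i,j\}}$. Your derivation of ``$\subseteq$'' directly from the commutative square \eqref{eq:diagram}, rather than from the criterion, is a harmless variant.

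What you should drop is your sketched proof of the criterion. The identity $(p_*^G\cE)|_{\bar x}\simeq(\cE|_x)^{G_x}$ is false even at a fixed point. Let $\fS_2$ act on $\mathbb{A}^1=\Spec\CC[t]$ by $t\mapsto -t$ and take $\cE=\cO\otimes\Sgn$. Then $p_*^{\fS_2}\cE$ is the module of odd functions $t\,\CC[t^2]$, which is free of rank one over $\CC[t^2]$, so its fiber at the origin is $\CC$. On the other hand $(\cE|_0)^{\fS_2}=0$.

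The reason is that the scheme-theoretic fiber of $p$ over $\bar x$ is non-reduced, so taking invariants does not commute with restricting to the closed point. Hence ``$p_*^G$ takes invariants'' does not show that the counit is an isomorphism at $x$ exactly when $G_x$ acts trivially on $\cE|_x$. That statement is true, but it is precisely the nontrivial content of Kempf's descent lemma, and for complexes it is Nevins' theorem.

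You have two ways to fix this. The simplest is to cite the criterion, as the paper does. Alternatively, after an \'etale slice reducing to the case where $x$ is fixed by $G_x$, take a $G_x$-equivariant minimal free resolution $F$ of $\cE$ over $\cO_{C^n,x}$; averaging makes this possible in characteristic zero. Minimality gives $F^k\simeq\cO_{C^n,x}\otimes\cH^k(\cE|_x)$ equivariantly. When these representations are trivial, the equivariant differentials have entries in $\cO_{C^n,x}^{G_x}$, so $F$ is pulled back from the local ring of the quotient.
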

\begin{proof}
    By the derived Kempf's criterion (see, for example, \cite[Theorem~1.3]{Nev08}),
    an object $\cE \in \Db([C^n / \fS_n])$ lies in $\pi^* \Db(C^n / \fS_n)$ if and only if for every closed point $x$, the derived fiber $\imath_x^* \cE$ carries the trivial $(\fS_n)_x$-linearization, where $(\fS_n)_x$ is the stabilizer subgroup at $x$.
    Similarly, an object $\cE \in \Db([C^n / \fS_n])$ lies in $ (\varpi_{\{i,j\}}^*)^{-1} \pi_{\{i,j\}}^*  \Db(C^n / \fS_{\{i,j\}})$ if and only if for every closed point $x$, the derived fiber $\imath_x^* \cE$ carries the trivial $(\fS_{\{i,j\}})_x$-linearization.
    Since $\imath_x^* \cE$ is a direct sum of shifted vector spaces, we are really looking at linear representations of $(\fS_n)_x$ and $(\fS_{\{i,j\}})_x$ on every cohomology $\cH^k(\imath_x^* \cE)$.

    It is clear that the left-hand side is contained in the right-hand side, since a trivial $(\fS_n)_x$-representation restricts to a trivial $(\fS_{\{i,j\}})_x$-representation for each $\{i,j\}$.
    For the other direction,
    we claim that for any closed point $x \in C^n$, all subgroups $(\fS_{\{i,j\}})_x$ always generate $(\fS_n)_x$.
    Indeed, up to an $\fS_n$-action, we may assume $x=(x_1, \dots, x_n)$ with
    \begin{equation}
        x_{1}= \dots = x_{\lambda_1} , \qquad x_{\lambda_1+1}= \dots = x_{\lambda_1+\lambda_2} , \quad \dots , \quad x_{n-\lambda_\ell +1}= \dots = x_{n} .
    \end{equation}
    Then the group $(\fS_n)_x$ is precisely of the form
    \begin{equation}
        (\fS_n)_x \simeq \fS_{\{ 1, \dots , \lambda_1 \}} \times \fS_{\{ \lambda_1 +1, \dots , \lambda_1 + \lambda_2 \}} \times \dots \times \fS_{\{ n -\lambda_\ell +1 , \dots , n \}} 
    \end{equation}
    (which is called a Young subgroup).
    In particular, it is generated by transpositions, which are precisely induced by these $(\fS_{\{i,j\}})_x$.
\end{proof}

For any pair $i\neq j$, we have a SOD
\begin{equation}\label{eq:SOD'}
    \Db\left([C^n / \fS_{\{i,j\}}]\right) = \left\langle \pi_{\{i,j\}}^* \Db(C^n / \fS_{\{i,j\}}), \delta_{\{i,j\} *}\Db(\Delta_{\{i,j\}}) \right\rangle
\end{equation}
from \eqref{eq:SOD}. Here, we identify $\Db(\Delta_{\{i,j\}})$ as the subcategory with trivial linearization in the splitting 
\begin{equation}
    \Db([\Delta_{\{i,j\}} / \fS_{\{i,j\}}]) = \Db(\Delta_{\{i,j\}}) \oplus \bigl( \Db(\Delta_{\{i,j\}}) \otimes \Sgn_{\{i,j\}} \bigr),
\end{equation}
where $\Sgn_{\{i,j\}}$ denotes the nontrivial $\fS_{\{i,j\}}$-linearization.
The proof of Lemma~\ref{lem:n=2} implies the following.
\begin{lemma}\label{lem:quotient}
    Any $\otimes \Sgn_{\{i,j\}}$-invariant stability condition on $\Db([C^n / \fS_{\{i,j\}} ])$ restricts to both SOD components in \eqref{eq:SOD'}.
\end{lemma}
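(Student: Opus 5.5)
The plan is to repeat the argument of Lemma~\ref{lem:n=2} with $[C^2/\fS_2]$ replaced by $[C^n/\fS_{\{i,j\}}]$. I will check that each ingredient used there is still available in this setting. Throughout, fix a $\otimes \Sgn_{\{i,j\}}$-invariant stability condition $\sigma$ on $\Db([C^n/\fS_{\{i,j\}}])$.

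\emph{Step 1: restriction to $\delta_{\{i,j\}*}\Db(\Delta_{\{i,j\}})$.} The big diagonal $\Delta_{\{i,j\}}$ is a fiber of the map $C^n \to C$, $(x_1,\dots,x_n)\mapsto x_i - x_j$. This map goes to an abelian variety, and all of its fibers are translates of one another, so the fibration is isotrivial. Hence \cite[Theorem~1.7]{cheng2025bridgeland} shows that every stability condition on $\Db(C^n)$ restricts to $\Db(\Delta_{\{i,j\}})$. In particular, every $\fS_{\{i,j\}}$-invariant stability condition restricts to an $\fS_{\{i,j\}}$-invariant one on $\Db(\Delta_{\{i,j\}})$.

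Passing to equivariant categories as in the case $n=2$, $\sigma$ restricts to $\delta_{\{i,j\}*}\Db([\Delta_{\{i,j\}}/\fS_{\{i,j\}}])$. The group $\fS_{\{i,j\}}$ acts trivially on $\Delta_{\{i,j\}}$, so this category splits orthogonally into its two isotypic components. HN filtrations are compatible with such an orthogonal decomposition, so the restriction further descends to the trivial component $\delta_{\{i,j\}*}\Db(\Delta_{\{i,j\}})$.

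\emph{Step 2: restriction to $\pi_{\{i,j\}}^*\Db(C^n/\fS_{\{i,j\}})$.} First I need the SOD \eqref{eq:SOD'}. Here $C^n/\fS_{\{i,j\}} \simeq (C^2/\fS_2)\times C^{n-2}$, and \cite{IU15,PB19} applies to any reflection-type action of $\fS_2$ whose fixed locus is the smooth divisor $\Delta_{\{i,j\}}$. This gives the adjoint pairs $\delta_{\{i,j\}}^*\dashv\delta_{\{i,j\}*}\dashv\delta_{\{i,j\}}^!$ and $\pi_{\{i,j\}!}\dashv\pi_{\{i,j\}}^*$, together with the gluing triangle
\begin{equation}
\delta_{\{i,j\}*}\delta_{\{i,j\}}^!\cF \to \cF \to \pi_{\{i,j\}}^*\pi_{\{i,j\}!}\cF \to \delta_{\{i,j\}*}\delta_{\{i,j\}}^!\cF[1].
\end{equation}
Next, Grothendieck--Verdier duality for the codimension-one embedding $\delta_{\{i,j\}}$ gives
\begin{equation}
\delta_{\{i,j\}}^!(-)\simeq\delta_{\{i,j\}}^*(-\otimes\Sgn_{\{i,j\}})[-1].
\end{equation}
The normal bundle of $\Delta_{\{i,j\}}$ is trivial as a line bundle, but $\fS_{\{i,j\}}$ acts on it by the sign character. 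This is exactly as in \cite{Nir09}.

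By \cite[Theorem~2.1.2]{Pol07}, as in the case $n=2$, it then suffices to prove
\begin{equation}
\phi^-(\cF)\le\phi^-(\delta_{\{i,j\}*}\delta_{\{i,j\}}^!\cF[1]).
\end{equation}
To prove this, let $\cG$ be the minimal HN factor of $\delta_{\{i,j\}*}\delta_{\{i,j\}}^!\cF$. By Step 1, $\cG\simeq\delta_{\{i,j\}*}\delta_{\{i,j\}}^!\cG$. The same chain of adjunctions as in the case $n=2$ then gives $\Hom(\cF\otimes\Sgn_{\{i,j\}},\cG[1])\neq0$. From this, $\otimes\Sgn_{\{i,j\}}$-invariance of $\sigma$ yields the required inequality.

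The main obstacle is making sure the inputs specific to $n=2$ really hold in general. The first is the SOD \eqref{eq:SOD'}, where one must also check that it is indexed so that $\pi_{\{i,j\}}^*$ has the left adjoint used above. The second is the duality formula with the sign twist. Both are local statements near $\Delta_{\{i,j\}}$ and the relevant situation is a product with $C^{n-2}$, so I would first try to deduce them by base change from the $n=2$ case.
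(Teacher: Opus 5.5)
Your proposal is correct and follows the paper's own proof step by step. In Step 1 you restrict to $\delta_{\{i,j\}*}\Db(\Delta_{\{i,j\}})$ via \cite[Theorem~1.7]{cheng2025bridgeland} for the isotrivial map $x_i-x_j$ together with the isotypic splitting; in Step 2 you reduce, via Polishchuk's criterion and the gluing triangle, to $\phi^-(\cF)\le\phi^-(\delta_{\{i,j\}*}\delta_{\{i,j\}}^!\cF[1])$, which you obtain from the minimal HN factor, the sign-twisted duality, and $\otimes\Sgn_{\{i,j\}}$-invariance, exactly as the paper does.
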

\begin{proof}
    This follows literally from the same proof of Lemma~\ref{lem:n=2}, with
    \begin{equation}
        C^2, \qquad \fS_2, \qquad \Delta, \qquad \pi, \qquad \delta, \qquad \Sgn
    \end{equation}
    replaced by
    \[
    C^n, \quad \fS_{\{i,j\}}, \quad \Delta_{\{i,j\}}, \quad \pi_{\{i,j\}}, \quad \delta_{\{i,j\}}, \quad \Sgn_{\{i,j\}}. 
    \]
    We walk through the proof below for the reader's convenience.

    We first show that any stability condition on $\Db\left([C^n / \fS_{\{i,j\}}]\right)$ restricts to $\delta_{\{i,j\} *}\Db(\Delta_{\{i,j\}})$.
    The big diagonal $\Delta_{\{i,j\}}$ is an isotrivial fiber of the map $(x_1, \dots, x_n) \mapsto x_i -x_j$ to the elliptic curve $C$.
    By \cite[Theorem~1.7]{cheng2025bridgeland}, we know 
    that any stability condition on $\Db(C^n)$ restricts to $\Db(\Delta_{\{i,j\}})$.
    Consequently,
    any stability condition on $\Db([C^n / \fS_{\{i,j\}}])$ restricts to $\delta_* \Db([\Delta_{\{i,j\}} / \fS_{\{i,j\}}])$, hence restricts further to $\delta_{\{i,j\}*} \Db(\Delta_{\{i,j\}})$.
    
    Now we show that any $\otimes \Sgn_{\{i,j\}}$-invariant stability condition restricts to $\pi_{\{i,j\}}^* \Db(C^n / \fS_{\{i,j\}})$.
    By \cite[Theorem~2.1.2 and proof of Corollary~2.2.2]{Pol07}, it suffices to show that 
    \begin{equation}
        \phi^-(\pi_{\{i,j\}}^* \pi_{\{i,j\}!} \cF) \geq \phi^-(\cF).
    \end{equation}
    By the distinguished triangle associated with the SOD \eqref{eq:SOD'},
    we know that
    \begin{equation}
        \phi^-( \pi_{\{i,j\}}^* \pi_{\{i,j\}!} \cF ) \geq \min \left\{ \phi^-(\cF), \phi^-(\delta_{\{i,j\}*} \delta_{\{i,j\}}^! \cF[1]) \right\},
    \end{equation}
    so it suffices to show
    \begin{equation} \label{ineq:n=2'}
        \phi^-(\cF) \leq \phi^-(\delta_{\{i,j\}*} \delta_{\{i,j\}}^! \cF [1]).
    \end{equation}
    Let $\cG$ be the minimal HN factor of $\delta_{\{i,j\}*} \delta_{\{i,j\}}^! \cF$.
    From the first part of the proof, we know the stability condition restricts to $\delta_{\{i,j\}*} \Db(\Delta_{\{i,j\}})$; together with $\delta_{\{i,j\}}^! \delta_{\{i,j\}*}  \simeq \id$, we have
    \begin{equation}\label{eq:factor'}
        \cG \simeq \delta_{\{i,j\}*} \delta_{\{i,j\}}^!\cG.
    \end{equation}
    By Grothendieck--Verdier duality, we know that
    \begin{equation}\label{eq:duality'}
        \delta_{\{i,j\}}^!(-) \simeq \delta_{\{i,j\}}^*(- \otimes \Sgn_{\{i,j\}})[-1].
    \end{equation}
    Therefore, we have
    \begin{align}
    \Hom(\delta_{\{i,j\}*} \delta_{\{i,j\}}^! \cF,\cG) & \simeq \Hom(\delta_{\{i,j\}}^!\cF ,\delta_{\{i,j\}}^! \cG) \\
    & \overset{\mathclap{\eqref{eq:duality'}}}{\simeq} \Hom(\delta_{\{i,j\}}^*(\cF \otimes \Sgn_{\{i,j\}} [-1]) , \delta_{\{i,j\}}^!\cG) \overset{\eqref{eq:factor'}}{\simeq} \Hom(\cF \otimes \Sgn_{\{i,j\}}, \cG [1]) .
    \end{align}
    As $\Hom(\delta_{\{i,j\}*} \delta_{\{i,j\}}^! \cF,\cG) \neq 0$, we know $\Hom(\cF \otimes \Sgn_{\{i,j\}}, \cG  [1]) \neq 0$, hence
    \begin{equation}
        \phi^-(\cF \otimes \Sgn_{\{i,j\}}) \leq \phi(\cG[1]) = \phi^-(\delta_{\{i,j\}*} \delta_{\{i,j\}}^! \cF [1]) .
    \end{equation}
    Since the stability condition is $\otimes \Sgn_{\{i,j\}}$-invariant, this is equivalent to \eqref{ineq:n=2'}.
\end{proof}

\begin{proof}[Proof of Theorem~\ref{to quotient}]
    It suffices to show that for any object $\cF \in \pi^* \Db(C^n / \fS_n)$, its HN factors also belong to $\pi^* \Db(C^n / \fS_n)$.
    From the commutativity of diagram \eqref{eq:diagram}, we have $\varpi_{\{i,j\}}^* \cF \in \pi_{\{i,j\}}^*\Db(C^n / \fS_{\{i,j\}})$ for every pair $i \neq j$.
    Lemma~\ref{lem:quotient} then implies that the HN factors of $\varpi_{\{i,j\}}^* \cF$ remain inside the subcategory $\pi_{\{i,j\}}^*\Db(C^n / \fS_{\{i,j\}})$.
    Since this condition holds for all pairs $i \neq j$, applying Lemma~\ref{intersection} allows us to conclude that the HN factors of the original object $\cF$ are contained in $\pi^* \Db(C^n / \fS_n)$, as desired.
\end{proof}

\bigskip

\subsection*{Acknowledgements}
I would like to thank Soheyla Feyzbakhsh, Lie Fu, Chunyi Li, Dongjian Wu, and Xiaolei Zhao for helpful discussions. 
The author was supported by the Royal Society through the University Research Fellowship URF/R1/231191.

\bigskip

\bigskip

\bibliographystyle{alphaurl}
\bibliography{ref}

\end{document}